\newcommand{\noun}[1]{\textsc{#1}}
\theoremstyle{plain}
      \newtheorem{prop}{\protect\propositionname}
      \newtheorem{prop}{\protect\propositionname}[chapter]
\providecommand{\propositionname}{Proposition}
\begin{document}
\title{Karush–Kuhn–Tucker conditions to build efficient contractors\\
Application to TDoA localization}
\author{Luc Jaulin}
\institution{Lab-Sticc, ENSTA-Bretagne}

\maketitle
\textbf{Abstract}. This paper proposes an efficient contractor for
the TDoA (Time Differential of Arrival) equation. The contractor is
based on a minimal inclusion test which is built using the Karush–Kuhn–Tucker
(KKT) conditions. An application related to the localization of sound
sources using a TDoA technique is proposed. 

\section{Introduction}

To solve nonlinear problems with nonlinear constraints, a classical
approach is based on the Karush–Kuhn–Tucker (KKT) conditions \citep{Karush:39}
\citep{Kuhn:Tucker:51}. To use the KKT conditions, we first have
to formulate our problem as an optimization problem in standard form: 

\begin{equation}
\begin{array}{c}
\text{minimize}f(\mathbf{x})\\
\text{s.t.}\,\,\mathbf{g}(\mathbf{x})\leq0
\end{array}
\end{equation}
where function $f:\mathbb{R}^{n}\mapsto\mathbb{R}$ and $\mathbf{g}:\mathbb{R}^{n}\mapsto\mathbb{R}^{m}$
are assumed to be differentiable. We then build the Lagrangian 
\begin{equation}
\begin{array}{ccc}
\mathcal{L}(\mathbf{x},\boldsymbol{\mu}) & = & f(\mathbf{x})+\boldsymbol{\mu}^{\text{T}}\cdot\mathbf{g}(\mathbf{x}).\end{array}
\end{equation}
The necessary conditions for $\mathbf{x}$ to be an optimizer are:
\[
\left\{ \begin{array}{ccl}
\frac{df}{d\mathbf{x}}(\mathbf{x})+\sum_{i}\mu_{i}\frac{dg_{i}}{d\mathbf{x}}(\mathbf{x})=0 &  & \text{(stationarity)}\\
g_{i}(\mathbf{x})\leq0,\forall i &  & \text{(primal feasibility)}\\
\mu_{i}\geq0 &  & \text{\text{(dual feasibility)}}\\
\mu_{i}g_{i}(\mathbf{x})=0,\forall i &  & \text{(complementary slackness)}
\end{array}\right.
\]

The problem can be interpreted as moving a particle at position $\mathbf{x}$
in the space, with two kinds of forces:
\begin{itemize}
\item $f$ is a potential and the force generated by $f$ is $-\text{grad}f$. 
\item The constraint $g_{i}(\mathbf{x})\leq0$ corresponds to reaction forces
generated by one-sided constraint surfaces delimiting the free space
for $\mathbf{x}$. The particle is allowed to move inside $g_{i}(\mathbf{x})\leq0$,
but as soon as it touches the surface $g_{i}(\mathbf{x})=0$, it is
pushed inwards the free space. 
\end{itemize}
Stationarity states that $-\text{grad}f$ is a linear combination
of the reaction forces. Dual feasibility states that reaction forces
point inwards the free space for $\mathbf{x}$. Slackness states that
if $g_{i}(\mathbf{x})<0$ , then the corresponding reaction force
must be zero, since the particle is not in contact with the surface.

Interval methods have used these KKT conditions to solve nonlinear
optimization problems \citep{Moore79}\citep{Hansen92}\citep{Ratschek88}
when inequality constraints are involved. 

In this paper, we propose to use the KKT conditions to build minimal
inclusion tests in order to derive efficient contractors. For this,
we consider a constraint of the form $y=f(\mathbf{x})$ where $f:\mathbb{R}^{n}\rightarrow\mathbb{R}$
and we assume that $\mathbf{x}\in[\mathbf{x}]$, where $[\mathbf{x}]$
is an axis aligned closed box of $\mathbb{R}^{n}$. The feasible values
for $y$ is an interval $[y]=[y^{-},y^{+}]$ which can be obtained
by solving the two minimization problems
\begin{equation}
\begin{array}{ccc}
y^{-} & = & \min f(\mathbf{x})\\
 &  & \mathbf{x}\in[\mathbf{x}]
\end{array}
\end{equation}
and 
\begin{equation}
\begin{array}{ccc}
y^{+} & =- & \min(-f(\mathbf{x}))\\
 &  & \mathbf{x}\in[\mathbf{x}]
\end{array}
\end{equation}

As a consequence, the KKT conditions could be used as least for the
forward contraction, \emph{i.e.}, to contract the feasible interval
$[y]$ for $y$. These conditions can be treated either symbolically
for simple constraints or automatically with pessimism as in \citep{Hansen92}.

Section \ref{sec:TDoA-constraint} defines the TDoA constraint \citep{Lee:multilateration}
which will illustrate the benefit brought by the use of the KKT conditions.
Section \ref{sec:Action} introduces the notion of action of a contractor
on a separator. This notion will allow us build complex separator
using the composition with other constraints. Section \ref{sec:Application}
illustrates how the notion of action of a TDoA contractor on the separator
(obtained after data treatment) can be used to localize sound sources.
Section \ref{sec:Conclusion} concludes the paper. 

\section{TDoA constraint\label{sec:TDoA-constraint}}

The TDoA constraint is defined by
\begin{equation}
\|\mathbf{x}-\mathbf{a}\|-\|\mathbf{x}-\mathbf{b}\|=y\label{eq:TDoA}
\end{equation}
where $\mathbf{x}\in\mathbb{R}^{2}$ and $y\in\mathbb{R}$ are the
variables. The parameters $\mathbf{a}\in\mathbb{R}^{2},\mathbf{b}\in\mathbb{R}^{2}$
are assumed to be known. Equivalently, we have
\begin{equation}
f(\mathbf{x})=y
\end{equation}
where 
\begin{equation}
f(\mathbf{x})=\sqrt{(x_{1}-a_{1})^{2}+(x_{2}-a_{2})^{2}}-\sqrt{(x_{1}-b_{1})^{2}+(x_{2}-b_{2})^{2}}.\label{eq:f:TDoA}
\end{equation}
In this section, we want to build an efficient contractor for (\ref{eq:TDoA})
, \emph{i.e.}, given a box $[\mathbf{x}]\ni\mathbf{x}$ and an interval
$[y]\ni y,$ we want to contract $[\mathbf{x}]$ and $[y]$ without
removing a single pair $(\mathbf{x},y)$ of the constraint (see \citep{ChabertJaulin09}
for a formal definition of a contractor). We will mainly focus on
the forward contraction, \emph{i.e.}, the contraction of $[y]$ which
can be interpreted as an interval evaluation of $f$. Interval analysis
has already been used to solve problems involving the TDoA constraint
in \citep{ReynetJaulinChabert09}, \citep{DrevelleThese} and \citep{jaulin2023hyperbola}.

\textbf{Notation}. In what follows, $[\mathbb{A}]$ represents the
smallest closed interval which contains the set $\mathbb{A}\subset\mathbb{R}$.
When $\mathbb{A}\subset\mathbb{R}^{n},$ $[\mathbb{A}]$ denotes the
smallest axis-aligned box which contains $\mathbb{A}.$

\subsection{Interval evaluation}

The interval evaluation of $f([\mathbf{x}])$ over a box $[\mathbf{x}]$
can be obtained using the following proposition.
\begin{prop}
Given a non-degenerated box $[\mathbf{x}]\in\mathbb{R}^{2}$, and
the function $f$ given by (\ref{eq:f:TDoA}), we have
\begin{equation}
f([\mathbf{x}])=[f(\mathbb{P}_{0}\cup\mathbb{P}_{1}\cup\mathbb{P}_{2})]
\end{equation}
with 
\begin{equation}
\begin{array}{ccc}
\mathbb{P}_{0} & = & \{(x_{1}^{-},x_{2}^{-}),(x_{1}^{-},x_{2}^{+}),(x_{1}^{+},x_{2}^{-}),(x_{1}^{+},x_{2}^{+})\}\\
\mathbb{P}_{1} & = & \left\{ (x_{1},x_{2})\in\partial[x_{1}]\times[x_{2}]\,|\,x_{2}=\varphi_{1}(x_{1},\mathbf{a},\mathbf{b})\right\} \\
\mathbb{P}_{2} & = & \left\{ (x_{1},x_{2})\in[x_{1}]\times\partial[x_{2}]\,|\,x_{1}=\varphi_{2}(x_{2},\mathbf{a},\mathbf{b})\right\} 
\end{array}
\end{equation}
with
\begin{equation}
\begin{array}{ccc}
\varphi_{1}(x_{1},\mathbf{a},\mathbf{b}) & = & \frac{a_{2}\cdot|x_{1}-b_{1}|-b_{2}\cdot|x_{1}-a_{1}|}{|x_{1}-b_{1}|-|x_{1}-a_{1}|}\\
\varphi_{2}(x_{2},\mathbf{a},\mathbf{b}) & = & \frac{a_{1}\cdot|x_{2}-b_{2}|-b_{1}\cdot|x_{2}-a_{2}|}{|x_{2}-b_{2}|-|x_{2}-a_{2}|}
\end{array}
\end{equation}
\end{prop}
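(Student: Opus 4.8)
The plan is to show that the interval evaluation $f([\mathbf{x}])$, which is the image of the connected compact box $[\mathbf{x}]$ under the continuous function $f$, is attained at its extreme values either on the corners of the box, or at interior critical points, or at points where the gradient is aligned with an edge of the box. The overall strategy is to recast the computation of $\min f$ and $\max f$ over $[\mathbf{x}]$ as two optimization problems with box constraints, and to apply the KKT conditions advertised in the introduction. The key observation is that the extrema of $f$ over the box occur either in the interior, or on the relative interior of an edge, or at a corner; I would analyze each stratum separately.

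First I would argue that $f$ has no critical points in the interior of $[\mathbf{x}]$ (assuming $\mathbf{a}\neq\mathbf{b}$ and excluding the degenerate configurations). Computing $\text{grad} f$, the gradient is the difference of two unit vectors, $\frac{\mathbf{x}-\mathbf{a}}{\|\mathbf{x}-\mathbf{a}\|}-\frac{\mathbf{x}-\mathbf{b}}{\|\mathbf{x}-\mathbf{b}\|}$, which vanishes only on the ray through $\mathbf{a}$ and $\mathbf{b}$ beyond both foci, where $f$ is in fact constant (equal to $\pm\|\mathbf{a}-\mathbf{b}\|$) rather than strictly extremal; so interior stationary points never give isolated extrema, and the corner set $\mathbb{P}_0$ together with boundary behaviour suffices. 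This reduces the problem to the boundary $\partial[\mathbf{x}]$, which is the union of four edges.

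Next, on each edge one variable is fixed (say $x_1=x_1^{-}$ or $x_1=x_1^{+}$, so $x_1\in\partial[x_1]$) and $f$ becomes a one-dimensional function of the free variable $x_2$ ranging over $[x_2]$. Its extrema occur either at the endpoints of the edge — which are corners, hence in $\mathbb{P}_0$ — or at stationary points of $x_2\mapsto f(x_1,x_2)$, i.e. where $\partial f/\partial x_2=0$. Setting the $x_2$-component of the gradient to zero gives $\frac{x_2-a_2}{\|\mathbf{x}-\mathbf{a}\|}=\frac{x_2-b_2}{\|\mathbf{x}-\mathbf{b}\|}$, and the core calculation is to solve this for $x_2$ as a function of the fixed $x_1$ and the parameters. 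I would manipulate this single scalar equation — cross-multiplying and simplifying the ratios $\|\mathbf{x}-\mathbf{a}\|$ and $\|\mathbf{x}-\mathbf{b}\|$ — to isolate $x_2$, and verify that the resulting closed form is exactly $\varphi_1(x_1,\mathbf{a},\mathbf{b})$; the absolute-value terms $|x_1-a_1|$ and $|x_1-b_1|$ should appear naturally once the radicals are reduced along the fixed-$x_1$ line. This defines $\mathbb{P}_1$. By the symmetric argument with the roles of $x_1$ and $x_2$ interchanged (fixing $x_2\in\partial[x_2]$ and stationarizing in $x_1$), I recover $\mathbb{P}_2$ with $\varphi_2$.

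Finally I would assemble these pieces: every candidate location of an extremum of $f$ on $[\mathbf{x}]$ lies in $\mathbb{P}_0\cup\mathbb{P}_1\cup\mathbb{P}_2$, so $f([\mathbf{x}])\subseteq[f(\mathbb{P}_0\cup\mathbb{P}_1\cup\mathbb{P}_2)]$; the reverse inclusion is immediate since each candidate point lies in $[\mathbf{x}]$, giving equality. The main obstacle I anticipate is twofold: the algebraic step that derives the clean formula for $\varphi_1$ (and $\varphi_2$) from the stationarity condition, where keeping track of the signs hidden inside the absolute values requires care, and the handling of degenerate or boundary configurations — points where a radical vanishes (i.e. $\mathbf{x}=\mathbf{a}$ or $\mathbf{x}=\mathbf{b}$, where $f$ is non-differentiable), and the collinear case where $\text{grad}\,f$ vanishes on a whole segment. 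The hypothesis that $[\mathbf{x}]$ is non-degenerate should be used precisely to ensure the edges have nonempty relative interior so that the edge-stationarity analysis is meaningful.
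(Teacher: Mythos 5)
Your proposal is correct and follows essentially the same route as the paper: casting the extrema of $f$ over the box as KKT problems, splitting into corner, edge, and interior strata, deriving $\varphi_{1}$ and $\varphi_{2}$ from the edge stationarity condition $\partial f/\partial x_{2}=0$ (resp.\ $\partial f/\partial x_{1}=0$), and dismissing interior stationary points because there $f$ equals $\pm\|\mathbf{a}-\mathbf{b}\|$, a value also attained on the boundary. Your added care about the non-differentiable points $\mathbf{x}=\mathbf{a},\mathbf{b}$ and the sign bookkeeping in the absolute values is a sensible refinement, but it does not change the argument.
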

This proposition tells us that the extrema for $f$ are reached either
on the corners of $[\mathbf{x}]$ or on the edges of $[\mathbf{x}]$
but never in the interior of $[\mathbf{x}]$. The border operator
$\partial$ is used to get the bounds of the interval. For instance
$\partial[x_{1}]\times[x_{2}]=\{x_{1}^{-},x_{1}^{+}\}\times[x_{2}]=(\{x_{1}^{-}\}\times[x_{2}])\cup(\{x_{1}^{+}\}\times[x_{2}])$.
An illustration is given by Figure \ref{fig: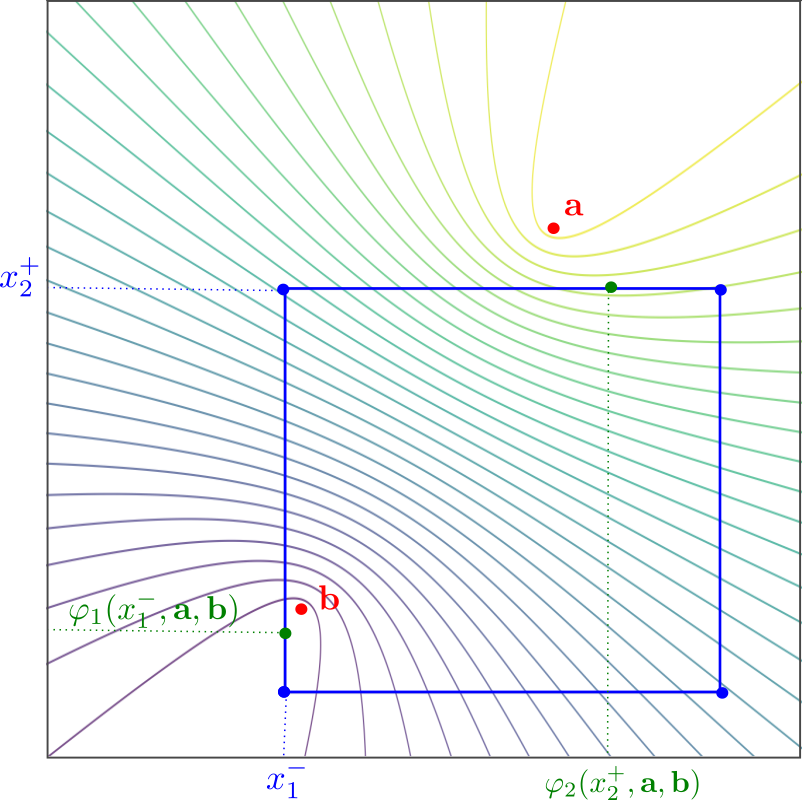}. 

\begin{figure}[H]
\begin{centering}
\includegraphics[width=8cm]{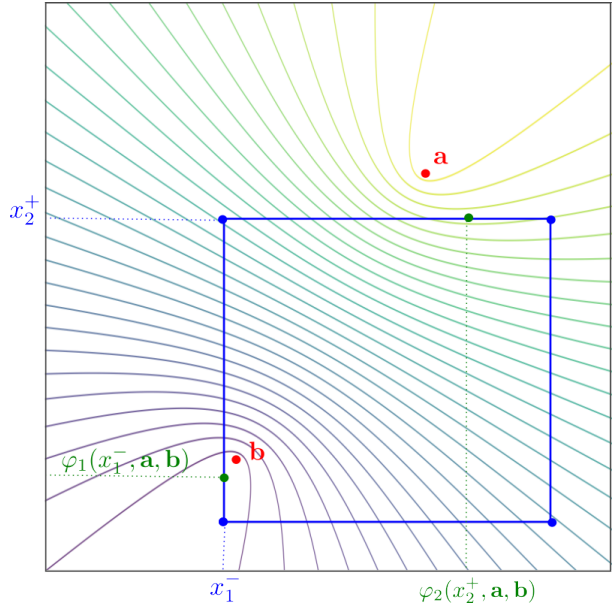}
\par\end{centering}
\caption{Level curve of the TDoA function $f$. The extrema of $f$ over a
box are reached either on the corners (blue) or on some specific points
of the edges (green)}
\label{fig:tdoa_contour.png}
\end{figure}

\begin{proof}
We need to solve
\begin{equation}
\begin{array}{c}
\text{Optimize}f(\mathbf{x})\\
\text{s.t.}\,\,\mathbf{g}(\mathbf{x})\leq0
\end{array}
\end{equation}
where 
\begin{equation}
\mathbf{g}(\mathbf{x})=\left(\begin{array}{c}
x_{1}-x_{1}^{+}\\
-x_{1}+x_{1}^{-}\\
x_{2}-x_{2}^{+}\\
-x_{2}+x_{2}^{-}
\end{array}\right).
\end{equation}
The Lagrangian is
\begin{equation}
\begin{array}{ccc}
\mathcal{L}(\mathbf{x},\boldsymbol{\mu}) & = & f(\mathbf{x})+\boldsymbol{\mu}\cdot\mathbf{g}(\mathbf{x})\\
 & = & \sqrt{(x_{1}-a_{1})^{2}+(x_{2}-a_{2})^{2}}-\sqrt{(x_{1}-b_{1})^{2}+(x_{2}-b_{2})^{2}}\\
 &  & +\mu_{1}(x_{1}-x_{1}^{+})-\mu_{2}(x_{1}-x_{1}^{-})+\mu_{3}(x_{2}-x_{2}^{+})-\mu_{4}(x_{2}-x_{2}^{-})
\end{array}
\end{equation}

The necessary conditions for $\mathbf{x}$ to be an optimizer are:
\begin{equation}
\left\{ \begin{array}{cc}
\begin{array}{c}
\frac{\partial f}{\partial x_{1}}(\mathbf{x})+\mu_{1}x_{1}-\mu_{2}x_{1}=0\\
\frac{\partial f}{\partial x_{2}}(\mathbf{x})+\mu_{3}x_{2}-\mu_{4}x_{2}=0
\end{array} & \text{(stationarity)}\\
g_{i}(\mathbf{x})\leq0,\forall i & \text{(primal feasibility)}\\
\mu_{i}\geq0 & \text{\text{(dual feasibility)}}\\
\mu_{i}g_{i}(\mathbf{x})=0,\forall i & \text{(complementary slackness)}
\end{array}\right.
\end{equation}
where
\begin{equation}
\begin{array}{ccc}
\frac{\partial f}{\partial x_{1}}(\mathbf{x}) & = & \frac{x_{1}-a_{1}}{\sqrt{(x_{1}-a_{1})^{2}+(x_{2}-a_{2})^{2}}}-\frac{x_{1}-b_{1}}{\sqrt{(x_{1}-b_{1})^{2}+(x_{2}-b_{2})^{2}}}\\
\frac{\partial f}{\partial x_{2}}(\mathbf{x}) & = & \frac{x_{2}-a_{2}}{\sqrt{(x_{1}-a_{1})^{2}+(x_{2}-a_{2})^{2}}}-\frac{x_{2}-b_{2}}{\sqrt{(x_{1}-b_{1})^{2}+(x_{2}-b_{2})^{2}}}
\end{array}
\end{equation}
\end{proof}
Since the box $[\mathbf{x}]$ is non degenerated (\emph{i.e.}, it
has a non empty interior), and from the complementary slackness condition,
an optimizer should correspond to one of the following situations:
\begin{itemize}
\item $\mathbf{x}$ is a corner, $(\mu_{1},\mu_{3})=(0,0)$ , $(\mu_{1},\mu_{4})=(0,0),$
$(\mu_{2},\mu_{3})=(0,0)$ or $(\mu_{2},\mu_{4})=(0,0),$
\item \textbf{$\mathbf{x}$} is an edge, $(\mu_{1},\mu_{2},\mu_{3})=(0,0,0)$,
$(\mu_{1},\mu_{2},\mu_{4})=(0,0,0)$, $(\mu_{1},\mu_{3},\mu_{4})=(0,0,0)$
or $(\mu_{2},\mu_{3},\mu_{4})=(0,0,0).$
\item \textbf{$\mathbf{x}$} is in the interior of $[\mathbf{x}]$, \emph{i.e}.,
$(\mu_{1},\mu_{2},\mu_{3},\mu_{4})=(0,0,0,0)$. 
\end{itemize}
\textbf{Case 1}. $\mathbf{x}$ is a corner. The optimizers are inside
the set 
\begin{equation}
\mathbb{P}_{0}=\{(x_{1}^{-},x_{2}^{-}),(x_{1}^{-},x_{2}^{+}),(x_{1}^{+},x_{2}^{-}),(x_{1}^{+},x_{2}^{+})\}.
\end{equation}
\textbf{Case 2}. \textbf{$\mathbf{x}$} is an edge. Take first $\mu_{2}=\mu_{3}=\mu_{4}=0$
which means that we consider the right edge of $[\mathbf{x}]$: $x_{1}-x_{1}^{+}=0$.
The other edges are deduced by symmetry. We get
\[
\begin{array}{cl}
 & \left\{ \begin{array}{c}
\frac{\partial f}{\partial x_{1}}(\mathbf{x})+\mu_{1}x_{1}=0\\
\frac{\partial f}{\partial x_{2}}(\mathbf{x})=0
\end{array}\right.\\
\,\\
\Rightarrow & \,\,\frac{\partial f}{\partial x_{2}}(\mathbf{x})=0\\
\,\\
\Leftrightarrow & \frac{x_{2}-a_{2}}{\sqrt{(x_{1}-a_{1})^{2}+(x_{2}-a_{2})^{2}}}-\frac{x_{2}-b_{2}}{\sqrt{(x_{1}-b_{1})^{2}+(x_{2}-b_{2})^{2}}}=0\\
\,\\
\Leftrightarrow & \left\{ \begin{array}{c}
(x_{2}-a_{2})^{2}(x_{1}-b_{1})^{2}=(x_{2}-b_{2})^{2}(x_{1}-a_{1})^{2}\\
(x_{2}-a_{2})(x_{2}-b_{2})\geq0
\end{array}\right.\\
\,\\
\Leftrightarrow & (x_{2}-a_{2})\sqrt{(x_{1}-b_{1})^{2}}=(x_{2}-b_{2})\sqrt{(x_{1}-a_{1})^{2}}\\
\,\\
\Leftrightarrow & (x_{2}-a_{2})\cdot|x_{1}-b_{1}|=(x_{2}-b_{2})\cdot|x_{1}-a_{1}|\\
\,\\
\Leftrightarrow & x_{2}\cdot|x_{1}-b_{1}|-a_{2}\cdot|x_{1}-b_{1}|=x_{2}\cdot|x_{1}-a_{1}|-b_{2}\cdot|x_{1}-a_{1}|\\
\,\\
\Leftrightarrow & x_{2}\cdot\left(|x_{1}-b_{1}|-|x_{1}-a_{1}|\right)=a_{2}\cdot|x_{1}-b_{1}|-b_{2}\cdot|x_{1}-a_{1}|\\
\,\\
\Leftrightarrow & x_{2}=\frac{a_{2}\cdot|x_{1}-b_{1}|-b_{2}\cdot|x_{1}-a_{1}|}{|x_{1}-b_{1}|-|x_{1}-a_{1}|}
\end{array}
\]
Since $x_{1}=x_{1}^{+}$, we conclude that $x_{2}=\varphi_{1}(x_{1}^{+},\mathbf{a},\mathbf{b})$.
It means that if an optimizer is in the interior of the right edge,
it is the point $(x_{1}^{+},\varphi_{1}(x_{1}^{+},\mathbf{a},\mathbf{b}))$.

\textbf{Case 3}. $\mathbf{x}$ is in the interior of $[\mathbf{x}]$.
We have
\[
\begin{array}{cl}
 & \left\{ \begin{array}{ccc}
\frac{\partial f}{\partial x_{1}}(\mathbf{x}) & = & 0\\
\frac{\partial f}{\partial x_{2}}(\mathbf{x}) & = & 0
\end{array}\right.\\
\,\\
\Leftrightarrow & \left\{ \begin{array}{c}
\frac{x_{1}-a_{1}}{\sqrt{(x_{1}-a_{1})^{2}+(x_{2}-a_{2})^{2}}}=\frac{x_{1}-b_{1}}{\sqrt{(x_{1}-b_{1})^{2}+(x_{2}-b_{2})^{2}}}\\
\frac{x_{2}-a_{2}}{\sqrt{(x_{1}-a_{1})^{2}+(x_{2}-a_{2})^{2}}}=\frac{x_{2}-b_{2}}{\sqrt{(x_{1}-b_{1})^{2}+(x_{2}-b_{2})^{2}}})
\end{array}\right.\\
 & \,\\
\Leftrightarrow & \left\{ \begin{array}{c}
\frac{\left(x_{1}-a_{1}\right)^{2}}{(x_{1}-a_{1})^{2}+(x_{2}-a_{2})^{2}}=\frac{\left(x_{1}-b_{1}\right)^{2}}{(x_{1}-b_{1})^{2}+(x_{2}-b_{2})^{2}}\\
\frac{\left(x_{2}-a_{2}\right)^{2}}{(x_{1}-a_{1})^{2}+(x_{2}-a_{2})^{2}}=\frac{\left(x_{2}-b_{2}\right)^{2}}{(x_{1}-b_{1})^{2}+(x_{2}-b_{2})^{2}}
\end{array}\right.\\
\,\\
\Leftrightarrow & \left\{ \begin{array}{c}
\begin{array}{l}
\left(x_{1}-a_{1}\right)^{2}(x_{2}-b_{2})^{2}=(x_{2}-a_{2})^{2}\left(x_{1}-b_{1}\right)^{2}\\
\left(x_{1}-a_{1}\right)\left(x_{1}-b_{1}\right)\geq0\\
\left(x_{2}-a_{2}\right)\left(x_{2}-b_{2}\right)\geq0
\end{array}\end{array}\right.\\
 & \,\\
\Leftrightarrow & \left\{ \begin{array}{c}
\begin{array}{l}
\left(x_{1}-a_{1}\right)(x_{2}-b_{2})=(x_{2}-a_{2})\left(x_{1}-b_{1}\right)\\
\left(x_{1}-a_{1}\right)\left(x_{1}-b_{1}\right)\geq0
\end{array}\end{array}\right.
\end{array}
\]
It means that $\mathbf{x}$ belongs to one of the two exterior half
line delimited by $\mathbf{a},\mathbf{b}$ which crosses the boundary
of $[\mathbf{x}]$. The extremum correspond to $\pm\|\mathbf{a}-\mathbf{b}\|$,
which is also reached by one element of the boundary.

\subsection{Illustration}

Consider the set

\begin{equation}
\mathbb{X}=\left\{ \mathbf{x}\in\mathbb{R}^{2}|\|\mathbf{x}-\mathbf{a}\|-\|\mathbf{x}-\mathbf{b}\|\in[y]\right\} 
\end{equation}
where $\mathbf{a}=(-1,-2)$, $\mathbf{b}=(2,3)$ and $[y]=[3,5]$.
Equivalently, we have
\begin{equation}
\mathbb{X}=f^{-1}([y])
\end{equation}
where
\begin{equation}
f(\mathbf{x})=\|\mathbf{x}-\mathbf{a}\|-\|\mathbf{x}-\mathbf{b}\|
\end{equation}
We have the following tests
\begin{equation}
\begin{array}{ccc}
f([\mathbf{x}])\cap[y]=\emptyset & \Rightarrow & [\mathbf{x}]\cap\mathbb{X}=\emptyset\\
f([\mathbf{x}])\subset[y]=\emptyset & \Rightarrow & [\mathbf{x}]\subset\mathbb{X}
\end{array}
\end{equation}
These tests can be used by a paver to approximate $\mathbb{X}$. Now,
only an outer approximation of $f([\mathbf{x}])$ can be computed.
If we use the inclusion test based on the KKT conditions, we get Figure
\ref{fig: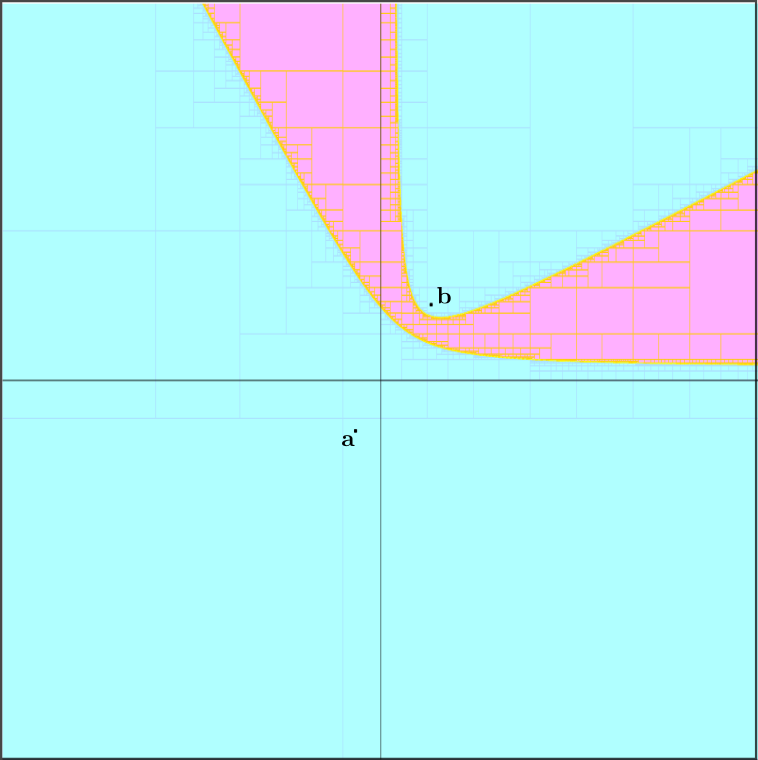}. We observe that only boxes that intersect
the boundary of $\mathbb{X}$ are bisected by the paver. This is due
to the fact that we have a minimal inclusion test and that $f$ is
scalar. Equivalently, we can say that we have no \emph{clustering
effect}. Using a classical interval extension \citep{Moore79}, we
get Figure \ref{fig:tdoa_test_classic} which contains more boxes
(238853 instead of 52779) for the same accuracy ($\varepsilon=0.01$).
The clustering effect is now visible. The computing time approximately
10 times smaller (less than 0.05 sec) with the KKT approach. The frame
box is $[-15,15]^{2}.$

\begin{figure}[H]
\begin{centering}
\includegraphics[width=8cm]{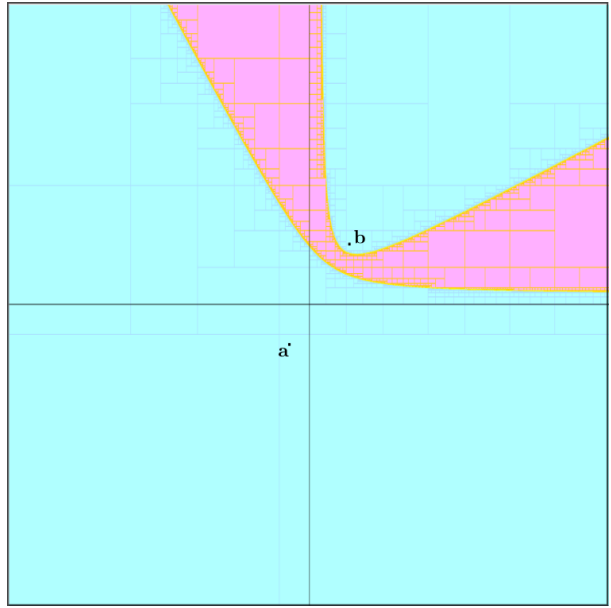}
\par\end{centering}
\caption{Set $\mathbb{X}$ obtained by the paver using the KKT conditions}
 \label{fig:tdoa_test_KKT.png}
\end{figure}

\begin{figure}[H]
\begin{centering}
\includegraphics[width=8cm]{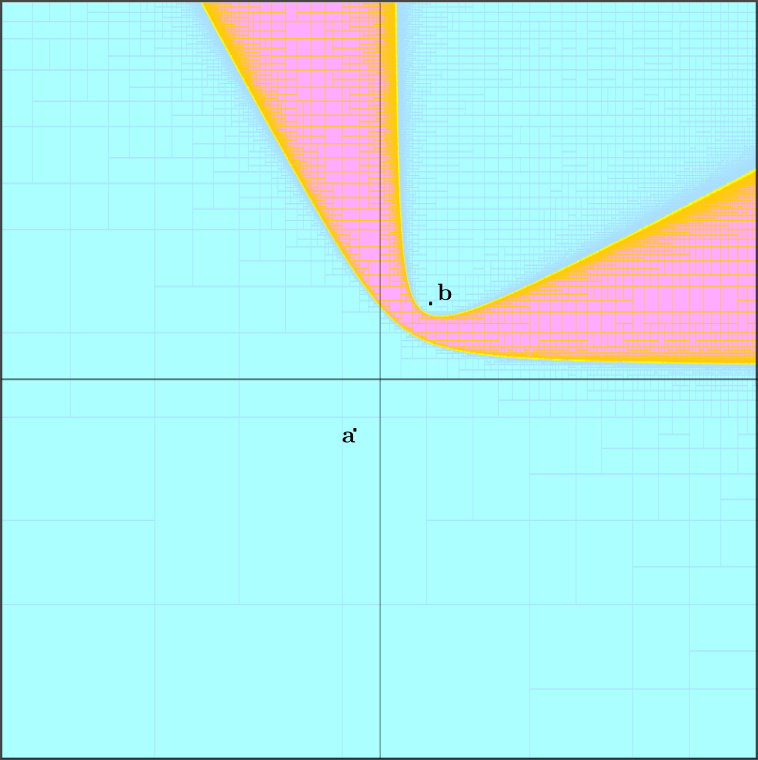}
\par\end{centering}
\caption{Set $\mathbb{X}$ obtained by the paver using a natural interval extension
for $f$}
 \label{fig:tdoa_test_classic}
\end{figure}

\subsection{Contractor from the inclusion test}

Consider a set $\mathbb{X}$ for which we have an inclusion test $[t]$
(see \citep{JaulinBook01}). Recall that an inclusion test returns
a Boolean interval, \emph{i.e.}, an element of $\mathbb{IB}=\left\{ [0,0],[0,1],[1,1]\right\} $
such that 
\begin{equation}
\begin{array}{ccc}
[t]([\mathbf{x}])=[0,0] & \Rightarrow & [\mathbf{x}]\cap\mathbb{X}=\emptyset\\{}
[t]([\mathbf{x}])=[1,1] & \Rightarrow & [\mathbf{x}]\subset\mathbb{X}=\emptyset.
\end{array}
\end{equation}
From an inclusion test $[t]$ for \noun{$\mathbb{X}$}, we can define
the contractor $\mathcal{C}_{\mathbb{X}}$ for $\mathbb{X}$ as
\begin{equation}
\begin{array}{cc}
\mathcal{C}_{\mathbb{X}}([\mathbf{x}])=\emptyset & \text{ if }[t]([\mathbf{x}])=[0,0]\\
\mathcal{C}_{\mathbb{X}}([\mathbf{x}])=[\mathbf{x}] & \text{otherwize}\text{ }
\end{array}
\end{equation}
Such a contractor is said to be \emph{binary} since it contracts a
box either to the empty set or not at all. If the test $[t]$ is minimal,
the contractor $\mathcal{C}_{\mathbb{X}}$ will not yield a clustering
effect. This shows why when we want to build an efficient contractor,
it is important to focus mostly on the forward part. 

Casting an inclusion test into a contractor allows us to use the contractor
algebra and the composition. This will be illustrated by the following
section. 

\section{Action of a contractor on a separator\label{sec:Action}}

For contractors as well for separators, classical operations of sets,
such as $\cap,\cup,\dots$ can be used. We propose here a new operation
combining contractors and separators. We will first introduce the
classical notion of correspondence (or multivalued mapping) which
can be seen as a generalization of functions. This leads us to the
notion of \emph{directed contractors} defined in \citep{jaulin2021boundary}. 

\subsection{Correspondence}

A \emph{correspondence} \citep{Aubin90}(or\emph{ binary relation})
between two sets $\mathbb{A}$ and $\mathbb{B}$ is any subset $\mathbb{F}$
of the Cartesian product $\mathbb{A}\times\mathbb{B}$. The domain
of $\mathbb{F}$ is
\begin{equation}
\text{dom}\mathbb{F}=\{a\in\mathbb{A}\,|\,\exists b,(a,b)\in\mathbb{F}\}.
\end{equation}
The range of $\mathbb{F}$ is 
\begin{equation}
\text{range}\mathbb{F}=\{b\in\mathbb{B}\,|\,\exists a,(a,b)\in\mathbb{F}\}.
\end{equation}
The image of $a\in\mathbb{A}$ by $\mathbb{F}$ is 
\begin{equation}
\mathbb{F}(a)=\{b,(a,b)\in\mathbb{F}\}
\end{equation}
and the co-image of $b$ by $\mathbb{F}$ is 
\begin{equation}
\mathbb{F}^{-1}(b)=\{a,(a,b)\in\mathbb{F}\}
\end{equation}
The \emph{inverse} of $\mathbb{F}$ is the correspondence defined
by
\begin{equation}
\mathbb{F}^{\#}=\{(b,a)|(a,b)\in\mathbb{F}\}.
\end{equation}

\subsection{Contractor for a correspondence}

Consider a contractor $\mathcal{C}_{\mathbb{F}}$ for the correspondence
$\mathbb{F}\subset\mathbb{R}^{n}\times\mathbb{R}^{p}$. We define
the \emph{forward contractor} as
\begin{equation}
\overset{\rightarrow}{\mathcal{C}}_{\mathbb{F}}^{[\mathbf{y}]}\left([\mathbf{x}]\right)=\pi_{\mathbf{y}}\circ\mathcal{C}_{\mathbb{F}}([\mathbf{x}],[\mathbf{y}])
\end{equation}
where $\pi_{\mathbf{y}}$ represents the projection onto $\mathbb{R}^{p}$
parallel to $\mathbb{R}^{n}$. The \emph{backward contractor} is defined
by
\begin{equation}
\overleftarrow{\mathcal{C}}_{\mathbb{F}}^{[\mathbf{x}]}\left([\mathbf{y}]\right)=\pi_{\mathbf{x}}\circ\mathcal{C}_{\mathbb{F}}([\mathbf{x}],[\mathbf{y}])
\end{equation}
Often, in our applications, $\mathbb{F}$ corresponds to a function
$\mathbf{f}:\mathbb{R}^{n}\mapsto\mathbb{R}^{p}$ or more precisely
to the graph of a function: $\mathbb{F}=\{(\mathbf{x},\mathbf{y})|\mathbf{y}=\mathbf{f}(\mathbf{x})\}$.

\subsection{Action}

Consider the correspondence $\mathbb{F}\subset\mathbb{R}^{n}\times\mathbb{R}^{p}$
and the set $\mathbb{Y}\subset\mathbb{R}^{p}$. We define the action
of $\mathbb{F}$ on $\mathbb{Y}$ as
\begin{equation}
\mathbb{F}\bullet\mathbb{X}=\left\{ \mathbf{y}\in\mathbb{R}^{p}|\exists\mathbf{x}\in\mathbb{X},(\mathbf{x},\mathbf{y})\in\mathbb{F}\right\} .
\end{equation}
As a consequence, 
\begin{equation}
\mathbb{F}^{\#}\bullet\mathbb{Y}=\left\{ \mathbf{x}\in\mathbb{R}^{n}|\exists\mathbf{y}\in\mathbb{Y},(\mathbf{x},\mathbf{y})\in\mathbb{F}\right\} .
\end{equation}

Note that the action used here has some similarities with the operators
used for group action \citep{olver1999}, even if the group structure
does not exist here. 
\begin{prop}
\label{prop:act:sep}Consider a separator $\mathcal{S}_{\mathbb{X}}=\left\{ \mathcal{S}_{\mathbb{X}}^{\text{in}},\mathcal{S}_{\mathbb{X}}^{\text{out}}\right\} $
for $\mathbb{X}$ and a contractor \noun{$\mathcal{C}_{\mathbb{F}}$}
for $\mathbb{F}\subset\mathbb{R}^{n}\times\mathbb{R}^{p}$. A separator
$\mathcal{S}_{\mathbb{Y}}$ for the set $\mathbb{Y}=\mathbb{F}\bullet\mathbb{X}$,
denoted by $\mathcal{S}_{\mathbb{Y}}=\mathcal{C}_{\mathbb{F}}\bullet\mathcal{S}_{\mathbb{X}}$
is:
\[
\begin{array}{ccl}
\mathcal{S}_{\mathbb{Y}}([\mathbf{y}]) & = & \mathcal{C}_{\mathbb{F}}\bullet\mathcal{S}_{\mathbb{X}}([\mathbf{y}])\\
 & = & \left\{ \mathcal{S}_{\mathbb{Y}}^{\text{in}}([\mathbf{y}]),\mathcal{S}_{\mathbb{Y}}^{\text{out}}([\mathbf{y}])\right\} \\
 & = & \left\{ \left(\left[[\mathbf{y}]\backslash\mathbb{F}\bullet\mathbb{R}^{n}\right]\right)\sqcup\overset{\rightarrow}{\mathcal{C}}_{\mathbb{F}}^{[\mathbf{y}]}\circ\mathcal{S}_{\mathbb{X}}^{\text{in}}\circ\overleftarrow{\mathcal{C}}_{\mathbb{F}}^{\mathbb{R}^{n}}([\mathbf{y}]),\overset{\rightarrow}{\mathcal{C}}_{\mathbb{F}}^{[\mathbf{y}]}\circ\mathcal{S}_{\mathbb{X}}^{\text{out}}\circ\overleftarrow{\mathcal{C}}_{\mathbb{F}}^{\mathbb{R}^{n}}([\mathbf{y}])\right\} 
\end{array}
\]
\end{prop}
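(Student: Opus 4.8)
The plan is to verify the two defining properties of a separator for $\mathbb{Y}=\mathbb{F}\bullet\mathbb{X}$: that $\mathcal{S}_{\mathbb{Y}}^{\text{out}}$ is a contractor for $\mathbb{Y}$ (it discards only points lying outside $\mathbb{Y}$), and that $\mathcal{S}_{\mathbb{Y}}^{\text{in}}$ is a contractor for the complement $\mathbb{R}^{p}\setminus\mathbb{Y}$ (it discards only points of $\mathbb{Y}$). Both will follow from a single point-chasing argument along the three-stage pipeline: first the backward contractor, then the appropriate half of the input separator ($\mathcal{S}_{\mathbb{X}}^{\text{out}}$ or $\mathcal{S}_{\mathbb{X}}^{\text{in}}$), then the forward contractor. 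The one ingredient I use throughout is the completeness of $\mathcal{C}_{\mathbb{F}}$, namely $([\mathbf{x}]\times[\mathbf{y}])\cap\mathbb{F}\subseteq\mathcal{C}_{\mathbb{F}}([\mathbf{x}],[\mathbf{y}])$, so that no feasible pair of $\mathbb{F}$ lying in the input box is ever removed. Projecting this fact gives the two consequences I need: $\overleftarrow{\mathcal{C}}_{\mathbb{F}}^{\mathbb{R}^{n}}([\mathbf{y}])\supseteq\mathbb{F}^{\#}\bullet[\mathbf{y}]$ (the backward step, run with the full space $\mathbb{R}^{n}$, captures every antecedent of $[\mathbf{y}]$), and that whenever $(\mathbf{x}^{*},\mathbf{y}^{*})\in\mathbb{F}$ with $\mathbf{x}^{*}\in[\mathbf{x}]$ and $\mathbf{y}^{*}\in[\mathbf{y}]$ one has $\mathbf{y}^{*}\in\overset{\rightarrow}{\mathcal{C}}_{\mathbb{F}}^{[\mathbf{y}]}([\mathbf{x}])$ (the forward step preserves the image of any surviving antecedent).

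For the outer part I would take an arbitrary $\mathbf{y}^{*}\in[\mathbf{y}]\cap\mathbb{Y}$ and produce, by definition of $\mathbb{F}\bullet\mathbb{X}$, a witness $\mathbf{x}^{*}\in\mathbb{X}$ with $(\mathbf{x}^{*},\mathbf{y}^{*})\in\mathbb{F}$. I then chase $\mathbf{x}^{*}$ through the pipeline: it lies in $\overleftarrow{\mathcal{C}}_{\mathbb{F}}^{\mathbb{R}^{n}}([\mathbf{y}])$ by backward completeness; it survives $\mathcal{S}_{\mathbb{X}}^{\text{out}}$ because $\mathbf{x}^{*}\in\mathbb{X}$ and the outer half of a separator keeps all points of $\mathbb{X}$; and finally its image $\mathbf{y}^{*}$ survives the forward step by forward preservation. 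Hence $\mathbf{y}^{*}\in\mathcal{S}_{\mathbb{Y}}^{\text{out}}([\mathbf{y}])$, i.e.\ $[\mathbf{y}]\cap\mathbb{Y}\subseteq\mathcal{S}_{\mathbb{Y}}^{\text{out}}([\mathbf{y}])$, which is exactly the outer contractor property.

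For the inner part I would take $\mathbf{y}^{*}\in[\mathbf{y}]$ with $\mathbf{y}^{*}\notin\mathbb{Y}$ and split on whether $\mathbf{y}^{*}\in\mathbb{F}\bullet\mathbb{R}^{n}$. If $\mathbf{y}^{*}\notin\mathbb{F}\bullet\mathbb{R}^{n}$, it has no antecedent at all, hence none in $\mathbb{X}$; it then falls in the hull $[[\mathbf{y}]\setminus\mathbb{F}\bullet\mathbb{R}^{n}]$ and is retained through the $\sqcup$. Otherwise $\mathbf{y}^{*}$ does have an antecedent, and since $\mathbf{y}^{*}\notin\mathbb{F}\bullet\mathbb{X}$ \emph{every} antecedent $\mathbf{x}^{*}$ lies in the complement of $\mathbb{X}$; fixing one such $\mathbf{x}^{*}$, the same chase applies, but now the middle stage is $\mathcal{S}_{\mathbb{X}}^{\text{in}}$, which keeps points of $\mathbb{R}^{n}\setminus\mathbb{X}$, so $\mathbf{x}^{*}$ survives and $\mathbf{y}^{*}$ is recovered by the forward step. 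Either way $\mathbf{y}^{*}\in\mathcal{S}_{\mathbb{Y}}^{\text{in}}([\mathbf{y}])$, proving $[\mathbf{y}]\setminus\mathbb{Y}\subseteq\mathcal{S}_{\mathbb{Y}}^{\text{in}}([\mathbf{y}])$. The covering identity $\mathcal{S}_{\mathbb{Y}}^{\text{in}}([\mathbf{y}])\cup\mathcal{S}_{\mathbb{Y}}^{\text{out}}([\mathbf{y}])=[\mathbf{y}]$ then comes for free, since every $\mathbf{y}^{*}\in[\mathbf{y}]$ is either in $\mathbb{Y}$ (retained by the outer map) or not (retained by the inner map), while both maps are contractions of $[\mathbf{y}]$.

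I expect the delicate point to be the inner case, precisely because of the quantifier switch: membership $\mathbf{y}^{*}\in\mathbb{Y}$ is an \emph{existential} over antecedents, so its negation is a \emph{universal}, and I must ensure that the witness $\mathbf{x}^{*}$ fed into $\mathcal{S}_{\mathbb{X}}^{\text{in}}$ is genuinely outside $\mathbb{X}$ — which is also why the case $\mathbf{y}^{*}\notin\mathbb{F}\bullet\mathbb{R}^{n}$ must be handled by the separate range term rather than by the pipeline, as there is then no antecedent to chase at all. A secondary subtlety worth making explicit is that the backward stage must be run with the \emph{whole} space $\mathbb{R}^{n}$: were it run with a finite input box, an antecedent $\mathbf{x}^{*}$ could fall outside that box and be lost before $\mathcal{S}_{\mathbb{X}}$ ever sees it, and the completeness $\overleftarrow{\mathcal{C}}_{\mathbb{F}}^{\mathbb{R}^{n}}([\mathbf{y}])\supseteq\mathbb{F}^{\#}\bullet[\mathbf{y}]$ is exactly what makes this step sound.
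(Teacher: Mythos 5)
Your proposal is correct and follows essentially the same route as the paper: the paper's proof establishes exactly the two inclusions you target, $\mathcal{S}_{\mathbb{Y}}^{\text{out}}([\mathbf{y}])\supset[\mathbf{y}]\cap\mathbb{F}\bullet\mathbb{X}$ and $\mathcal{S}_{\mathbb{Y}}^{\text{in}}([\mathbf{y}])\supset[\mathbf{y}]\cap\left(\mathbb{R}^{p}\setminus\mathbb{F}\bullet\mathbb{X}\right)$, by pushing the completeness of $\mathcal{C}_{\mathbb{F}}$ and the retention properties of $\mathcal{S}_{\mathbb{X}}^{\text{in}},\mathcal{S}_{\mathbb{X}}^{\text{out}}$ through the same three-stage pipeline, with the hull term $\left[[\mathbf{y}]\setminus\mathbb{F}\bullet\mathbb{R}^{n}\right]$ covering the points that have no antecedent. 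Your pointwise chase (including the explicit quantifier switch in the inner case) is the element-level rendering of the paper's set-inclusion chains, identical in substance.
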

In this formula, $\left[[\mathbf{y}]\backslash\mathbb{F}\bullet\mathbb{R}^{n}\right]$
represents the smallest box which encloses the set, where 
\[
[\mathbf{y}]\backslash\mathbb{F}\bullet\mathbb{R}^{n}=\left\{ \mathbf{y}\in[\mathbf{y}]\,|\,\mathbf{y}\notin\mathbb{F}\bullet\mathbb{R}^{n}\right\} .
\]
 An illustration is provided by Figures \ref{fig:FbulletIn} and \ref{fig:FbulletOut}.

\begin{figure}[H]
\begin{centering}
\includegraphics[width=11cm]{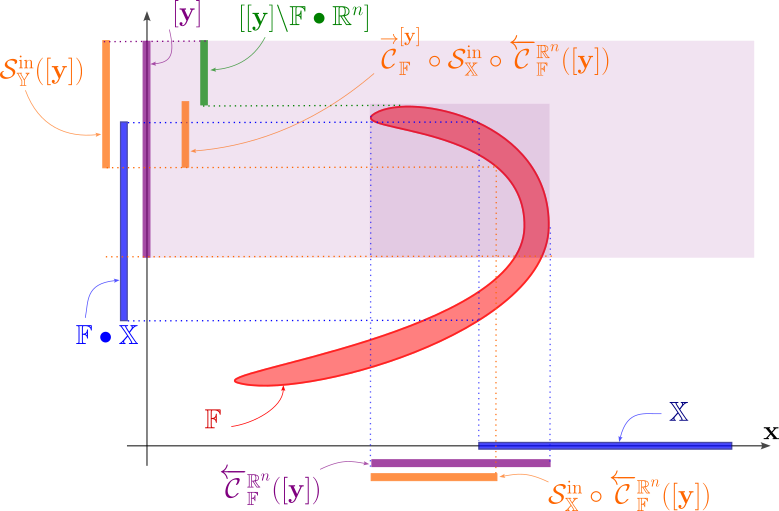}
\par\end{centering}
\caption{Inner contractor for the set $\mathbb{F}\bullet\mathbb{X}$}
\label{fig:FbulletIn}
\end{figure}

\begin{figure}[H]
\begin{centering}
\includegraphics[width=11cm]{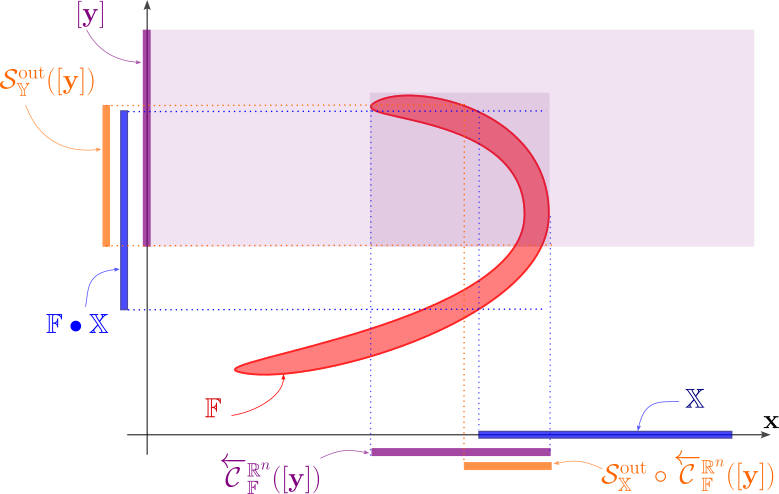}
\par\end{centering}
\caption{Outer contractor for the set $\mathbb{F}\bullet\mathbb{X}$ }
\label{fig:FbulletOut}
\end{figure}

\begin{proof}
We have
\begin{equation}
\begin{array}{ccl}
\mathcal{S}_{\mathbb{Y}}^{\text{in}}([\mathbf{y}]) & = & \left[[\mathbf{y}]\backslash\mathbb{F}\bullet\mathbb{R}^{n}\right]\sqcup\overset{\rightarrow}{\mathcal{C}}_{\mathbb{F}}^{[\mathbf{y}]}\circ\mathcal{S}_{\mathbb{X}}^{\text{in}}\circ\overleftarrow{\mathcal{C}}_{\mathbb{F}}^{\mathbb{R}^{n}}([\mathbf{y}])\\
 & = & \left[[\mathbf{y}]\backslash\mathbb{F}\bullet\mathbb{R}^{n}\right]\sqcup\overset{\rightarrow}{\mathcal{C}}_{\mathbb{F}}^{[\mathbf{y}]}\circ\mathcal{S}_{\mathbb{X}}^{\text{in}}\circ\pi_{\mathbf{x}}\left(\mathcal{C}_{\mathbb{F}}(\mathbb{R}^{n},[\mathbf{y}])\right)\\
 & \supset & \left[[\mathbf{y}]\backslash\mathbb{F}\bullet\mathbb{R}^{n}\right]\sqcup\overset{\rightarrow}{\mathcal{C}}_{\mathbb{F}}^{[\mathbf{y}]}\circ\mathcal{S}_{\mathbb{X}}^{\text{in}}\circ\pi_{\mathbf{x}}\left(\left[(\mathbf{x},\mathbf{y})|(\mathbf{x},\mathbf{y})\in\mathbb{F}\right]\right)\\
 & = & \left[[\mathbf{y}]\backslash\mathbb{F}\bullet\mathbb{R}^{n}\right]\sqcup\overset{\rightarrow}{\mathcal{C}}_{\mathbb{F}}^{[\mathbf{y}]}\circ\mathcal{S}_{\mathbb{X}}^{\text{in}}\left(\left[\mathbf{x}|\exists\mathbf{y}\in[\mathbf{y}],(\mathbf{x},\mathbf{y})\in\mathbb{F}\right]\right)\\
 & \supset & \left[[\mathbf{y}]\backslash\mathbb{F}\bullet\mathbb{R}^{n}\right]\sqcup\overset{\rightarrow}{\mathcal{C}}_{\mathbb{F}}^{[\mathbf{y}]}\left(\left[\mathbf{x}\notin\mathbb{X}|\exists\mathbf{y}\in[\mathbf{y}],(\mathbf{x},\mathbf{y})\in\mathbb{F}\right]\right)\\
 & = & \left[[\mathbf{y}]\backslash\mathbb{F}\bullet\mathbb{R}^{n}\right]\sqcup\pi_{\mathbf{y}}\circ\mathcal{C}_{\mathbb{F}}(\left[\mathbf{x}\notin\mathbb{X}|\exists\mathbf{y}\in[\mathbf{y}],(\mathbf{x},\mathbf{y})\in\mathbb{F}\right],[\mathbf{y}])\\
 & \supset & \left[[\mathbf{y}]\backslash\mathbb{F}\bullet\mathbb{R}^{n}\right]\cap\left\{ \mathbf{y}\in[\mathbf{y}]|\exists\mathbf{x}\notin\mathbb{X},(\mathbf{x},\mathbf{y})\in\mathbb{F}\right\} \\
 & = & [\mathbf{y}]\cap\left(\mathbb{R}^{p}\backslash\mathbb{F}\bullet\mathbb{X}\right)
\end{array}
\end{equation}

Moreover
\begin{equation}
\begin{array}{ccl}
\mathcal{S}_{\mathbb{Y}}^{\text{out}}([\mathbf{y}]) & = & \overset{\rightarrow}{\mathcal{C}}_{\mathbb{F}}^{[\mathbf{y}]}\circ\mathcal{S}_{\mathbb{X}}^{\text{out}}\circ\overleftarrow{\mathcal{C}}_{\mathbb{F}}^{\mathbb{R}^{n}}([\mathbf{y}])\\
 & = & \overset{\rightarrow}{\mathcal{C}}_{\mathbb{F}}^{[\mathbf{y}]}\circ\mathcal{S}_{\mathbb{X}}^{\text{out}}\circ\pi_{\mathbf{x}}\left(\mathcal{C}_{\mathbb{F}}(\mathbb{R}^{n},[\mathbf{y}])\right)\\
 & \supset & \overset{\rightarrow}{\mathcal{C}}_{\mathbb{F}}^{[\mathbf{y}]}\circ\mathcal{S}_{\mathbb{X}}^{\text{out}}\circ\pi_{\mathbf{x}}\left(\left[(\mathbf{x},\mathbf{y})|(\mathbf{x},\mathbf{y})\in\mathbb{F}\right]\right)\\
 & = & \overset{\rightarrow}{\mathcal{C}}_{\mathbb{F}}^{[\mathbf{y}]}\circ\mathcal{S}_{\mathbb{X}}^{\text{out}}\left(\left[\mathbf{x}|\exists\mathbf{y}\in[\mathbf{y}],(\mathbf{x},\mathbf{y})\in\mathbb{F}\right]\right)\\
 & \supset & \overset{\rightarrow}{\mathcal{C}}_{\mathbb{F}}^{[\mathbf{y}]}\left(\left[\mathbf{x}\in\mathbb{X}|\exists\mathbf{y}\in[\mathbf{y}],(\mathbf{x},\mathbf{y})\in\mathbb{F}\right]\right)\\
 & = & \pi_{\mathbf{y}}\circ\mathcal{C}_{\mathbb{F}}(\left[\mathbf{x}\in\mathbb{X}|\exists\mathbf{y}\in[\mathbf{y}],(\mathbf{x},\mathbf{y})\in\mathbb{F}\right],[\mathbf{y}])\\
 & \supset & \left\{ \mathbf{y}\in[\mathbf{y}]|\exists\mathbf{x}\in\mathbb{X},(\mathbf{x},\mathbf{y})\in\mathbb{F}\right\} \\
 & = & [\mathbf{y}]\cap\mathbb{F}\bullet\mathbb{X}
\end{array}
\end{equation}
\end{proof}
\begin{prop}
Consider a separator $\mathcal{S}_{\mathbb{Y}}=\left\{ \mathcal{S}_{\mathbb{Y}}^{\text{in}},\mathcal{S}_{\mathbb{Y}}^{\text{out}}\right\} $
for $\mathbb{Y}\subset\mathbb{R}^{p}$ and a contractor \noun{$\mathcal{C}_{\mathbb{F}}$}
for $\mathbb{F}\subset\mathbb{R}^{n}\times\mathbb{R}^{p}$. A separator
$\mathcal{S}_{\mathbb{X}}$ for the set $\mathbb{X}=\mathbb{F}^{\#}\bullet\mathbb{Y}$
is:

\[
\begin{array}{ccl}
\mathcal{S}_{\mathbb{X}}([\mathbf{x}]) & = & \mathcal{C}_{\mathbb{F}^{\#}}\bullet\mathcal{S}_{\mathbb{X}}([\mathbf{x}])\\
 & = & \left\{ \mathcal{S}_{\mathbb{X}}^{\text{in}}([\mathbf{x}]),\mathcal{S}_{\mathbb{X}}^{\text{out}}([\mathbf{x}])\right\} \\
 & = & \left\{ \left(\left[[\mathbf{x}]\backslash\mathbb{F}^{\#}\bullet\mathbb{R}^{p}\right]\right)\sqcup\overleftarrow{\mathcal{C}}_{\mathbb{F}}^{[\mathbf{x}]}\circ\mathcal{S}_{\mathbb{Y}}^{\text{in}}\circ\overset{\rightarrow}{\mathcal{C}}_{\mathbb{F}}^{\mathbb{R}^{p}}([\mathbf{x}]),\overleftarrow{\mathcal{C}}_{\mathbb{F}}^{[\mathbf{x}]}\circ\mathcal{S}_{\mathbb{Y}}^{\text{out}}\circ\overset{\rightarrow}{\mathcal{C}}_{\mathbb{F}}^{\mathbb{R}^{p}}([\mathbf{x}])\right\} 
\end{array}
\]
\end{prop}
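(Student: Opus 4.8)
The plan is to derive this proposition from Proposition \ref{prop:act:sep} by a duality argument, rather than repeating the chain of inclusions. The point is that $\mathbb{X}=\mathbb{F}^{\#}\bullet\mathbb{Y}$ is structurally identical to the set $\mathbb{Y}=\mathbb{F}\bullet\mathbb{X}$ handled in Proposition \ref{prop:act:sep}, once the two factors $\mathbb{R}^{n}$ and $\mathbb{R}^{p}$ are interchanged. First I would instantiate Proposition \ref{prop:act:sep} with $\mathbb{F}$ replaced by the inverse correspondence $\mathbb{F}^{\#}\subset\mathbb{R}^{p}\times\mathbb{R}^{n}$, letting $\mathcal{S}_{\mathbb{Y}}$ play the role of the input separator on the domain and $\mathbb{X}=\mathbb{F}^{\#}\bullet\mathbb{Y}$ play the role of the output set on the range. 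This produces directly
\[
\begin{array}{ccl}
\mathcal{S}_{\mathbb{X}}^{\text{in}}([\mathbf{x}]) & = & \left[[\mathbf{x}]\backslash\mathbb{F}^{\#}\bullet\mathbb{R}^{p}\right]\sqcup\overset{\rightarrow}{\mathcal{C}}_{\mathbb{F}^{\#}}^{[\mathbf{x}]}\circ\mathcal{S}_{\mathbb{Y}}^{\text{in}}\circ\overleftarrow{\mathcal{C}}_{\mathbb{F}^{\#}}^{\mathbb{R}^{p}}([\mathbf{x}])\\
\mathcal{S}_{\mathbb{X}}^{\text{out}}([\mathbf{x}]) & = & \overset{\rightarrow}{\mathcal{C}}_{\mathbb{F}^{\#}}^{[\mathbf{x}]}\circ\mathcal{S}_{\mathbb{Y}}^{\text{out}}\circ\overleftarrow{\mathcal{C}}_{\mathbb{F}^{\#}}^{\mathbb{R}^{p}}([\mathbf{x}])
\end{array}
\]
so that the only thing left to do is to re-express the directed contractors of $\mathbb{F}^{\#}$ in terms of those of $\mathbb{F}$.

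The second step would be to prove the two swap identities
\[
\overset{\rightarrow}{\mathcal{C}}_{\mathbb{F}^{\#}}=\overleftarrow{\mathcal{C}}_{\mathbb{F}}\qquad\text{and}\qquad\overleftarrow{\mathcal{C}}_{\mathbb{F}^{\#}}=\overset{\rightarrow}{\mathcal{C}}_{\mathbb{F}}.
\]
These come from the definition $\mathbb{F}^{\#}=\{(\mathbf{y},\mathbf{x})\,|\,(\mathbf{x},\mathbf{y})\in\mathbb{F}\}$: a contractor for $\mathbb{F}^{\#}$ is obtained from $\mathcal{C}_{\mathbb{F}}$ by exchanging its two coordinate blocks, i.e. $\mathcal{C}_{\mathbb{F}^{\#}}([\mathbf{y}],[\mathbf{x}])$ is $\mathcal{C}_{\mathbb{F}}([\mathbf{x}],[\mathbf{y}])$ read in the swapped order. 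Unfolding the definitions of the forward and backward contractors, the forward contractor of $\mathbb{F}^{\#}$ projects onto the $\mathbb{R}^{n}$ factor and hence equals $\pi_{\mathbf{x}}\circ\mathcal{C}_{\mathbb{F}}([\mathbf{x}],[\mathbf{y}])=\overleftarrow{\mathcal{C}}_{\mathbb{F}}$, while the backward contractor of $\mathbb{F}^{\#}$ projects onto the $\mathbb{R}^{p}$ factor and equals $\pi_{\mathbf{y}}\circ\mathcal{C}_{\mathbb{F}}([\mathbf{x}],[\mathbf{y}])=\overset{\rightarrow}{\mathcal{C}}_{\mathbb{F}}$. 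Substituting these two identities into the instantiated formula, and noting that $\mathbb{F}^{\#}\bullet\mathbb{R}^{p}$ is exactly the degenerate complement term already appearing in the statement, turns the instantiated expression into the asserted one.

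I expect the main obstacle to be purely one of bookkeeping rather than of genuine mathematical content: one must track the index interchange $n\leftrightarrow p$, check that the inner and outer roles of $\mathcal{S}_{\mathbb{Y}}$ are preserved under the relabelling, and verify that the composition order gets reversed precisely as written. Once the swap identities are in place, all the inclusions that certify $\mathcal{S}_{\mathbb{X}}^{\text{in}}$ as a valid inner contractor and $\mathcal{S}_{\mathbb{X}}^{\text{out}}$ as a valid outer contractor are inherited verbatim from the proof of Proposition \ref{prop:act:sep} under this relabelling, so no new estimate has to be produced.
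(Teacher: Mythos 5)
Your proposal is correct and takes essentially the same route as the paper: the paper's entire proof is the single sentence that the result ``is a direct consequence of Proposition~\ref{prop:act:sep}'', and your argument—instantiating that proposition at the inverse correspondence $\mathbb{F}^{\#}$ and then re-expressing its directed contractors via the swap identities $\overset{\rightarrow}{\mathcal{C}}_{\mathbb{F}^{\#}}=\overleftarrow{\mathcal{C}}_{\mathbb{F}}$ and $\overleftarrow{\mathcal{C}}_{\mathbb{F}^{\#}}=\overset{\rightarrow}{\mathcal{C}}_{\mathbb{F}}$—is exactly the bookkeeping that this one-line proof leaves implicit. Your write-up simply makes explicit what the paper asserts without detail.
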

\begin{proof}
It is a direct consequence of Proposition \ref{prop:act:sep}.
\end{proof}

\subsection{Illustration}

Consider the two disks
\begin{equation}
\begin{array}{ccc}
\mathbb{Y}_{1} & = & \left\{ (y_{1},y_{2})|\left(y_{1}-2\right)^{2}+\left(y_{2}-1\right)^{2}-1\leq0\right\} \\
\mathbb{Y}_{2} & = & \left\{ (y_{1},y_{2})|\left(y_{1}+1\right)^{2}+\left(y_{2}+2\right)^{2}-1\leq0\right\} 
\end{array}
\end{equation}
Consider the set $\mathbb{F}$ of all $(\mathbf{x},\mathbf{y})\subset\mathbb{R}^{n}$which
satisfy: 
\begin{equation}
\mathbb{F}:\left\{ \begin{array}{ccc}
\|\mathbf{x}-\mathbf{m}(1)\|-\|\mathbf{x}-\mathbf{m}(2)\|-y_{1} & = & 0\\
\|\mathbf{x}-\mathbf{m}(2)\|-\|\mathbf{x}-\mathbf{m}(3)\|-y_{2} & = & 0
\end{array}\right.\label{eq:fab}
\end{equation}
where 
\begin{equation}
\mathbf{m}(1)=\left(\begin{array}{c}
-1\\
-2
\end{array}\right),\,\mathbf{m}(2)=\left(\begin{array}{c}
2\\
3
\end{array}\right)\,\text{and }\mathbf{m}(3)=\left(\begin{array}{c}
4\\
1
\end{array}\right).\label{eq:m1m2m3}
\end{equation}
We want to characterize the set 
\begin{equation}
\mathbb{X}=\left\{ \mathbf{x}|\exists\mathbf{y}\in\mathbb{Y}_{1}\cup\mathbb{Y}_{2},(\mathbf{x},\mathbf{y})\in\mathbb{F}\right\} .
\end{equation}
Since 
\begin{equation}
\mathbb{X}=\mathbb{F}^{\#}\bullet(\mathbb{Y}_{1}\cup\mathbb{Y}_{2}),
\end{equation}
we get the following separator for $\mathbb{X}$:
\begin{equation}
\mathcal{S}_{\mathbb{X}}=\mathcal{C}_{\mathbb{F}^{\#}}\bullet(\mathcal{S}_{\mathbb{Y}_{1}}\cup\mathcal{S}_{\mathbb{Y}_{2}})
\end{equation}
where $\mathcal{C}_{\mathbb{F}^{\#}}$ is contractor for $\mathbb{F}$
and $\mathcal{S}_{\mathbb{Y}_{1}},\mathcal{S}_{\mathbb{Y}_{2}}$ are
separators for $\mathbb{Y}_{1},\mathbb{Y}_{2}$ . Using a paver, we
get the approximation of $\mathbb{X}$ depicted in Figure \ref{fig:tdoa_disks}.

\begin{figure}[H]
\begin{centering}
\includegraphics[width=8cm]{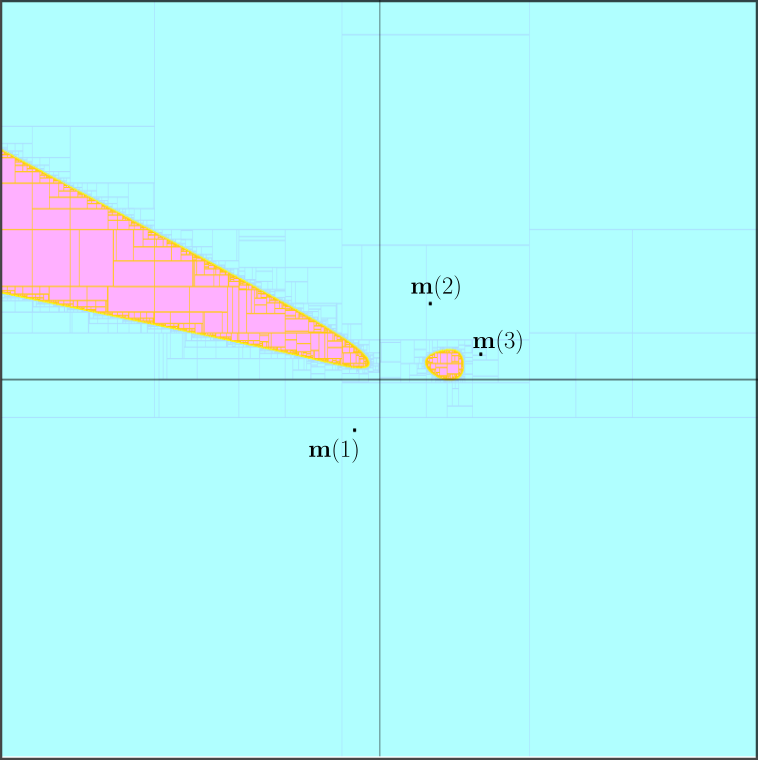}
\par\end{centering}
\caption{Approximation of the set $\mathbb{X}$ of all $\mathbf{x}$ consistent
with the two disks $\mathbb{Y}_{1},\mathbb{Y}_{2}$}
 \label{fig:tdoa_disks}
\end{figure}

\section{Application\label{sec:Application}}

Consider three microphones at positions $\mathbf{m}(1),\mathbf{m}(2),\mathbf{m}(3)$
of the plane (see (\ref{eq:m1m2m3})). They record sounds $s_{1}(t),s_{2}(t),s_{3}(t)$
of the noisy environment for a short time window. If for $\text{\ensuremath{\tau_{1},\tau_{2}}},$we
observe that $s_{1}(t),s_{2}(t+\tau_{1}),s_{3}(t+\tau_{1}+\tau_{2})$
are correlated, then we can guess the noise has possibly been emitted
from a position $\mathbf{x}$ which satisfies
\[
\left\{ \begin{array}{ccc}
\|\mathbf{x}-\mathbf{m}(1)\|-\|\mathbf{x}-\mathbf{m}(2)\|-c\tau_{1} & = & 0\\
\|\mathbf{x}-\mathbf{m}(2)\|-\|\mathbf{x}-\mathbf{m}(3)\|-c\tau_{2} & = & 0
\end{array}\right.
\]
where $c$ is the celerity of the sound. The quantity $y_{1}=c\tau_{1}$
and $y_{2}=c\tau_{2}$ are called \emph{pseudo-distances}. Using a
time-frequency analysis \citep{Cohen95}, it is possible to get a
possibility distribution \citep{Dubois80} in the pseudo-distance
plane ($y_{1},y_{2}$). 

For simplicity, assume that this possibility distribution is given
by: 
\[
\mu(\mathbf{y})=e^{-(y_{1}-2)^{2}-(y_{2}-1)^{2}}.
\]
Figure \ref{fig:tdoa_regionsY} illustrates this possibility distribution
for some $\alpha$-cuts, where
\[
\alpha_{i}=e^{-2^{i-1}},i\in\left\{ 0,\dots,5\right\} .
\]
Equivalently, the $\alpha$-cuts are defined by
\[
\begin{array}{ccc}
\mathbb{Y}_{\alpha_{i}} & = & \left\{ \mathbf{y}|e^{-(y_{1}-2)^{2}-(y_{2}-1)^{2}}\geq\alpha_{i}\right\} \\
 & = & \left\{ \mathbf{y}|(y_{1}-2)^{2}-(y_{2}-1)^{2}\leq2^{i-1}\right\} 
\end{array}
\]

The frame box is $[-10,10]^{2}.$

\begin{figure}[H]
\begin{centering}
\includegraphics[width=8cm]{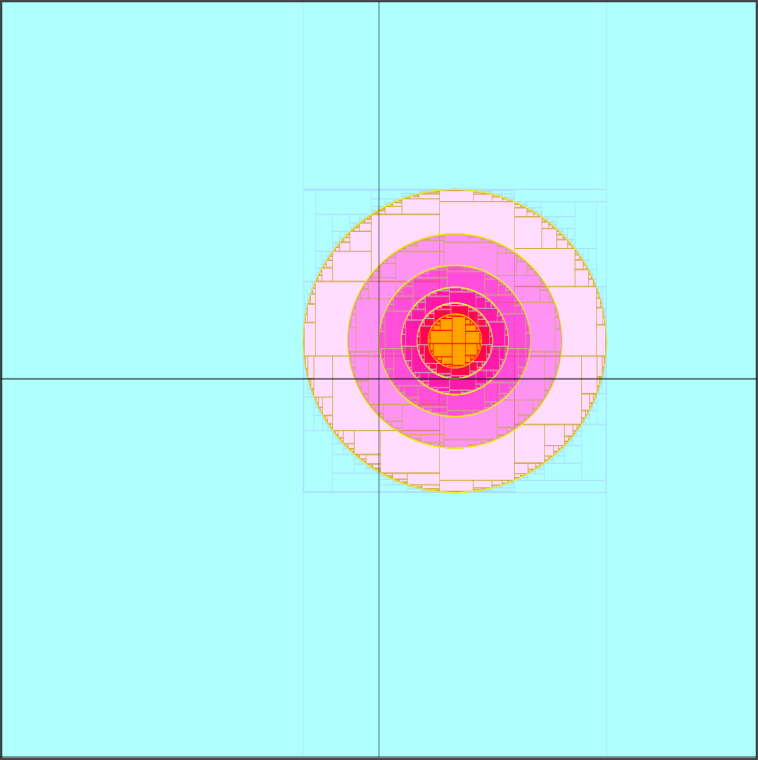}
\par\end{centering}
\caption{Possibility distribution $\mu(\mathbf{y})$ represented by its $\alpha$
cuts $\mathbb{Y}_{\alpha}$}
 \label{fig:tdoa_regionsY}
\end{figure}

For real applications, the possibility distribution has no reason
to be nested disks except maybe in the case where we have a unique
source. 

The corresponding possibility distribution for $\mathbf{x}$ is described
by the $\alpha$-cuts: 
\[
\mathbb{X}_{\alpha}=\mathbb{F}^{\#}\bullet\mathbb{Y}_{\alpha}
\]
as represented by Figure \ref{fig:tdoa_regions}. This image gives
us an idea of where the sound sources can possibly be located.

\begin{figure}[H]
\begin{centering}
\includegraphics[width=8cm]{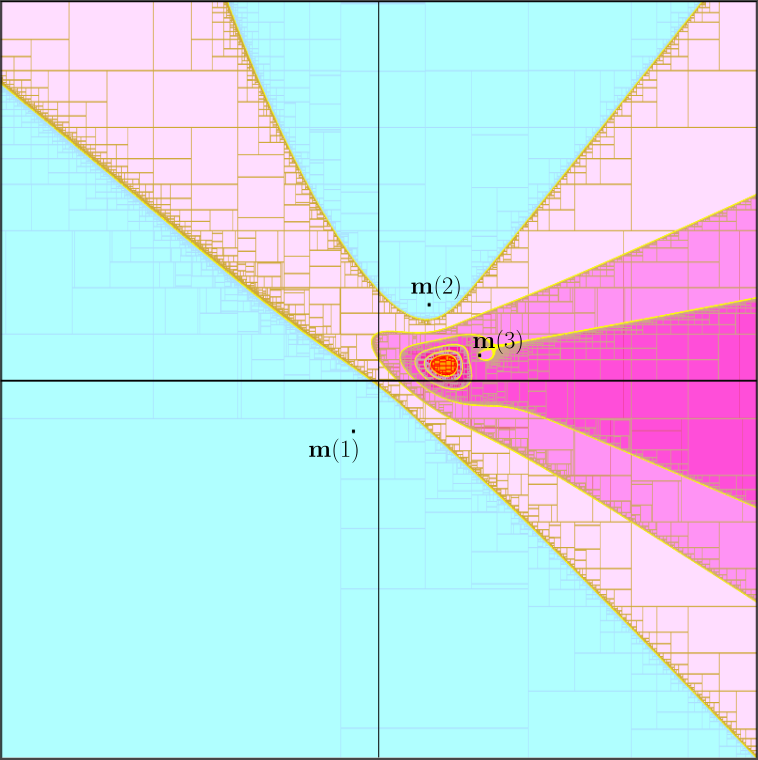}
\par\end{centering}
\caption{Possibility distribution $\mathbb{X}_{\alpha}$ for the location of
the sound sources}
 \label{fig:tdoa_regions}
\end{figure}

\section{Conclusion\label{sec:Conclusion}}

This paper has proposed to use the Karush–Kuhn–Tucker (KKT) conditions
to build efficient contractors for constraints of the form $y=f(\mathbf{x})$.
The motivating example that has been chosen considers the equation
of TDoA (Time Difference of Arrival) where the classical interval
propagation creates an unwanted pessimism due to the multi-occurences
of the variables. The KKT conditions lead us to a minimal inclusion
test. As a consequence, we were able to build a binary contractor
for the TDoA constraint with no clustering effect.

Another contribution of the paper is the definition of the action
of a contractor on a separator. This operation allowed us to build
separators by composition. The separator algebra, as defined in \citep{DesrochersEAAI2014},
extended set operations such as the intersection, the union or the
complement, to separators, but without the possibility the compose
the separators. We have shown that the compositions should not be
performed between separators, but between contractors and separators. 

The application that has been considered illustrates that a possibility
distribution can easily and efficiently be inverted through the TDoA
contractor to localize sources in a noisy environment. 

The Python code based on Codac \citep{codac} is given in \citep{jaulin:code:ctctdoa:23}.

\bibliographystyle{plain}

\begin{thebibliography}{10}

\bibitem{Aubin90}
J.P. Aubin and H.~Frankowska.
\newblock {\em Set-Valued Analysis}.
\newblock Birkh{\"a}user, Boston, 1990.

\bibitem{ChabertJaulin09}
G.~Chabert and L.~Jaulin.
\newblock {Contractor Programming}.
\newblock {\em Artificial Intelligence}, 173:1079--1100, 2009.

\bibitem{Cohen95}
L.~Cohen.
\newblock {\em Time–Frequency Analysis}.
\newblock Prentice-Hall, New York, 1995.

\bibitem{DrevelleThese}
V.~Drevelle.
\newblock {\em Etude de m\'ethodes ensemblistes robustes pour une localisation
  multisensorielle int\`egre. Application \`a la navigation des v\'ehicules en
  milieu urbain.}
\newblock {PhD} dissertation, Universit\'e de Technologie de Compi\`egne,
  Compi\`egne, France, 2011.

\bibitem{Dubois80}
D.~Dubois and H.~Prade.
\newblock {\em Fussy Sets and Systems-Theory and Applications}.
\newblock Academic Press, New York, NY, 1980.

\bibitem{Hansen92}
E.~R. Hansen.
\newblock Bounding the solution of interval linear equations.
\newblock {\em SIAM Journal on Numerical Analysis}, 29(5):1493--1503, 1992.

\bibitem{jaulin2021boundary}
L.~Jaulin.
\newblock A boundary approach for set inversion.
\newblock {\em Engineering Applications of Artificial Intelligence},
  100:104184, 2021.

\bibitem{jaulin:code:ctctdoa:23}
L.~Jaulin.
\newblock Codes associated with the paper entitled: {Karush–Kuhn–Tucker
  conditions to build efficient contractors; Application to TDoA localization}.
\newblock \texttt{www.ensta-bretagne.fr/jaulin/ctctdoa.html}, 2023.

\bibitem{jaulin2023hyperbola}
L.~Jaulin.
\newblock Optimal separator for an hyperbola; application to localization.
\newblock {\em arXiv:2305.15519, math.NA}, 2023.

\bibitem{DesrochersEAAI2014}
L.~Jaulin and B.~Desrochers.
\newblock Introduction to the algebra of separators with application to path
  planning.
\newblock {\em Engineering Applications of Artificial Intelligence},
  33:141--147, 2014.

\bibitem{JaulinBook01}
L.~Jaulin, M.~Kieffer, O.~Didrit, and E.~Walter.
\newblock {\em Applied {I}nterval {A}nalysis, with {E}xamples in {P}arameter
  and {S}tate {E}stimation, {R}obust {C}ontrol and {R}obotics}.
\newblock Springer-Verlag, London, 2001.

\bibitem{Karush:39}
W.~Karush.
\newblock {\em Minima of Functions of Several Variables with Inequalities as
  Side Constraints}.
\newblock {(M.Sc. thesis). Dept. of Mathematics}, Univ. of Chicago, Chicago,
  Illinois, 1939.

\bibitem{Kuhn:Tucker:51}
H.~Kuhn and A.~Tucker.
\newblock Nonlinear programming.
\newblock In {\em Proceedings of 2nd Berkeley Symposium}, pages 481--492.
  University of California Press, 1951.

\bibitem{Lee:multilateration}
H.~B. {Lee}.
\newblock Accuracy limitations of hyperbolic multilateration systems.
\newblock {\em IEEE Transactions on Aerospace and Electronic Systems},
  AES-11(1):16--29, 1975.

\bibitem{Moore79}
R.~Moore.
\newblock {\em Methods and Applications of Interval Analysis}.
\newblock Society for Industrial and Applied Mathematics, jan 1979.

\bibitem{olver1999}
P.J. Olver.
\newblock {\em Classical Invariant Theory}.
\newblock Graduate Texts in Mathematics. Univ. Press, Cambridge, U.K., 1999.

\bibitem{Ratschek88}
H.~Ratschek and J.~Rokne.
\newblock {\em New Computer Methods for Global Optimization}.
\newblock Ellis Horwood, Chichester, UK, 1988.

\bibitem{ReynetJaulinChabert09}
O.~Reynet, L.~Jaulin, and G.~Chabert.
\newblock Robust tdoa passive location using interval analysis and contractor
  programming.
\newblock In {\em Radar}, Bordeaux, France, 2009.

\bibitem{codac}
S.~Rohou.
\newblock {\em Codac (Catalog Of Domains And Contractors), available at {\tt
  http://codac.io/}}.
\newblock Robex, Lab-STICC, ENSTA-Bretagne, 2021.

\end{thebibliography}

\end{document}